\documentclass[11pt]{article}
\usepackage{amsmath,amsfonts,latexsym, xspace,amsthm,graphicx, amssymb}
\usepackage{enumerate}

\usepackage{url}

\usepackage{mathtools}
\mathtoolsset{showonlyrefs}

\newcommand{\al}[1]{
  \begin{align}
  #1
  \end{align}
}

\newcommand{\E}[1]{\mathbb{E} \left[#1\right]}

\newcommand{\Bin}[2]{\text{Bin}\left(#1, #2\right)}

\newcommand{\Var}[1]{\text{Var}\left(#1\right)}

\def\AA{\mathcal{A}}
\def\BB{\mathcal{B}}
\def\CC{\mathcal{C}}
\def\DD{\mathcal{D}}
\def\EE{\mathcal{E}}

\def\GG{\mathcal{G}}
\def\HH{\mathcal{H}}

\def\MM{\mathcal{M}}
\def\NN{\mathcal{N}}
\def\PP{\mathcal{P}}

\def\RR{\mathcal{R}}
\def\SS{\mathcal{S}}
\def\TT{\mathcal{T}}

\def\ol{\overline}

\def\N{\mathbb{N}}

\def\a{\alpha}
\def\b{\beta}
\def\d{\delta}

\def\e{\varepsilon}
\def\f{\phi}

\def\G{\Gamma}

\def\s{\sigma}

\def\t{\tau}

\def\om{\omega}
\def\OM{\Omega}
\newcommand\Prob[1]{{\mbox{Pr}\left\{#1\right\}}}

\newtheorem{theorem}{Theorem}

\newtheorem{lemma}[theorem]{Lemma}
\newtheorem{corollary}[theorem]{Corollary}

\newcommand{\bfrac}[2]{\left({\frac{#1}{#2}}\right)}

\title{On Hamilton cycles in Erd\H{o}s--R\'enyi subgraphs of large graphs}

\author{Tony Johansson\thanks{Research supported in part by the Knut and Alice Wallenberg foundation.} \\[.2cm]
{\small  Uppsala University} \\
{\small Uppsala, Sweden}}

\begin{document}
\maketitle

\begin{abstract}
Given a graph $\G = (V, E)$ on $n$ vertices and $m$ edges, we define the Erd\H{o}s-R\'enyi graph process with host $\G$ as follows. A permutation $e_1,\dots,e_m$ of $E$ is chosen uniformly at random, and for $t\leq m$ we let $\G_t = (V, \{e_1,\dots,e_t\})$. Suppose the minimum degree of $\G$ is $\d(\G) \geq (1/2 + \e)n$ for some constant $\e > 0$. Then with high probability\footnote{An event $\EE_n$ holds {\em with high probability (whp)} if $\Prob{\EE_n} \to 1$ as $n\to\infty$.}, $\G_t$ becomes Hamiltonian at the same moment that its minimum degree becomes at least two. 

Given $0\leq p\leq 1$ we let $\G_p$ be the Erd\H{o}s-R\'enyi subgraph of $\G$, obtained by retaining each edge independently with probability $p$. When $\d(\G)\geq (1/2 + \e)n$, we provide a threshold function $p_0$ for Hamiltonicity, such that if $(p-p_0)n\to -\infty$ then $\G_p$ is not Hamiltonian whp, and if $(p-p_0)n\to\infty$ then $\G_p$ is Hamiltonian whp.
\end{abstract}

\section{Introduction}\label{sec:intro}

Given a Hamiltonian graph $\G$ on $n$ vertices and $m$ edges, pick a random ordering $e_1,\dots,e_{m}$ of its edges, and let $\G_t$ (or $\G_{n, t}$) be the subgraph consisting of $e_1,\dots,e_t$. Let $\d(G)$ denote the minimum degree of a graph, and define
\al{
  \t_2 & = \min\{t : \d(\G_t) \geq 2\}, \\
  \t_H & = \min\{t : \G_t\text{ contains a Hamilton cycle}\}.
}
It is trivial to see that $\t_2 \leq \t_H$. A celebrated result, independently shown by Ajtai, Koml\'os and Szemer\'edi \cite{AKS} and by Bollob\'as \cite{Bollobas}, is the fact that $\t_2 = \t_H$ with high probability in the case $\G = K_n$. We generalize this result to a large class of graphs.
\begin{theorem}\label{thm:hit}
Let $\b > 1/2$ be constant and suppose $\d(\G) \geq \b n$. Then $\t_H = \t_2$ whp.
\end{theorem}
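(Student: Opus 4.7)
The plan is to imitate the classical Ajtai--Koml\'os--Szemer\'edi / Bollob\'as hitting-time proof for $K_n$, with the host graph $\G$ in place of $K_n$. The hypothesis $\d(\G) \geq \b n$ with $\b > 1/2$ is precisely what is needed to guarantee expansion in $\G_t$ and to run P\'osa's rotation--extension argument.

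First, I would pin down the order of $\t_2$. Each degree $d_{\G_t}(v)$ is hypergeometric with mean $t\,d_\G(v)/m \geq 2\b t/n$, so Chernoff bounds give whp $\t_2 \in [t_-, t_+]$ for explicit $t_\pm = \TH(n \log n)$. By first-moment arguments uniform in $t \in [t_-, t_+]$, I would then establish that whp the following hold for every such $t$: (i) the set $W_t := \{v : d_{\G_t}(v) \leq K\}$ has $|W_t| \leq \log^2 n$ and no two of its vertices lie within distance $2$ in $\G_t$; (ii) $\G_t$ is an $(\a n, 2)$-expander for some $\a = \a(\b) > 0$, meaning $|N_{\G_t}(S) \setminus S| \geq 2|S|$ whenever $|S| \leq \a n$; (iii) every pair of disjoint sets $A, B$ of size at least $\a n$ is joined by an edge of $\G_t$. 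Property (ii) is the main new content: any set $S$ has $\geq \b n - |S|$ neighbours in $\G$, of which enough survive in $\G_t$ when $t = \TH(n\log n)$; (iii) follows from a similar survival argument combined with a crude union bound.

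Once (i)--(iii) hold for $H = \G_{\t_2}$, Hamiltonicity follows from the now-standard fact (Krivelevich--Sudakov style, via P\'osa rotation--extension) that a graph satisfying an $(\a n, 2)$-expansion condition together with an edge between any two large disjoint vertex sets is Hamiltonian. The small exceptional set $W_t$ is handled by performing rotations only at endpoints outside $W_t$, which is feasible because $W_t$ is small and well-separated by (i). Combined with the trivial bound $\t_H \geq \t_2$, this gives $\t_H = \t_2$ whp.

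The main obstacle is establishing (i)--(iii) simultaneously for all $t$ in the window, since $\t_2$ is a random stopping time and not all of the relevant events are monotone in $t$. This requires careful union bounds over $t$ and over bad substructures $S$, together with hypergeometric concentration. The sparsity clause in (i) is the most delicate: ruling out two low-degree vertices sharing a $\G_t$-neighbour reduces to a second-moment computation that uses $\d(\G) > n/2$ to show that the $\G$-neighbourhoods of typical $W_t$-vertices have small intersection.
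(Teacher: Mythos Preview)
Your outline has a genuine gap at the final step. The claim that ``an $(\a n,2)$-expander with an edge between any two disjoint sets of size $\a n$ is Hamiltonian'' is not a standard fact: P\'osa's lemma combined with expansion only tells you that for every endpoint $y$ the opposite-endpoint set $EP(y)$ has size at least $\a n$, and to close a cycle you need an edge of $\G_{\t_2}$ from some $y$ into \emph{its own} $EP(y)$. Property~(iii) gives an edge between any two large sets, but the family of booster pairs $\{(x,y):x\in EP(y)\}$ is not a product $A\times B$; an edge between $EP(v_0)$ and $EP(y)$ is useless unless its $EP(v_0)$-end is $y$ itself. Deterministic criteria of the Hefetz--Krivelevich--Szab\'o type require a much larger expansion factor than~$2$. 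Worse, (ii) and (iii) cannot both hold with the same $\a$ when $\b$ is close to $1/2$: property~(ii) forces $\a\le 1/3$ (since $|N(S)\setminus S|\le n-|S|$), while (iii) needs $\a>1-\b$, because one can build a host $\G$ with $\d(\G)\ge\b n$ (two cliques of size $n/2$ joined by a $(\b-\tfrac12)n$-regular bipartite graph) containing disjoint $A,B$ of size $(1-\b)n$ with $e_\G(A,B)=0$, hence $e_{\G_t}(A,B)=0$.

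The paper proceeds quite differently, via a sprinkling argument. Edges are two-coloured red/blue at insertion; the blue graph (patched at low-degree vertices) is shown to be a connected $(\a n,2)$-expander, and the $\Omega(\om n)$ reserved red edges inside $\mathrm{LARGE}$ are used to hit boosters. The crucial step, and the only place where $\b>1/2$ is used beyond expansion, is Lemma~\ref{lem:boosterbuilding}: even when the expander has few booster pairs lying in $E(\G)$, a Dirac-type count on the host --- both endpoints $x,y$ have more than $n/2$ $\G$-neighbours on the longest path, so many indices $i$ satisfy $\{x,v_{i+1}\},\{y,v_i\}\in E(\G)$ --- produces $\Omega(n^2)$ ``semiboosters'' in $E(\G)$, and half the red edges are used to convert these into $\Omega(n^2)$ genuine boosters before the other half finishes the job. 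Your sketch never exploits $\d(\G)>n/2$ in this way, and without it the argument does not go through for $\b$ near $1/2$.
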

Dirac's theorem \cite{Dirac} states if $\G$ has $n$ vertices and $\d(\G)\geq n/2$, then $\G$ contains a Hamilton cycle. In many random graph models, it is enough that the random graph has constant minimum degree, see e.g. \cite{BohFri, FPPR, RW}. The most striking example of this phenomenon remains the Erd\H{o}s-R\'enyi graph, and the connection between Hamiltonicity and minimum degree $2$ has been well studied when $\G = K_n$. Alon and Krivelevich \cite{AK} recently proved that the probability that $G_{n, p}$ contains no Hamilton cycle is $(1 + o(1))\Prob{\d(G) < 2}$ for all values of $p$. It is known that at the moment the graph process reaches minimum degree $2k$, $G_{n, t}$ contains $k$ edge disjoint Hamilton cycles \cite{BolFri,KriSam,KKO}. Briggs et al \cite{BFKLS} showed that the edges upon insertion can be coloured in one of $k$ colours, with each colour class containing a Hamilton cycle at the moment the minimum degree reaches $2k$.

Given a graph $\G = (V, E)$ and $0\leq p\leq 1$, we also define the Erd\H{o}s-R\'enyi subgraph $\G_p$ (or $\G_{n, p}$) as the random subgraph of $\G$ obtained by independently retaining each edge with probability $p$. For $\G = K_n$ (so $\G_p = G_{n, p}$), we have \cite{Posa, Korsunov, KS} that if $p = (\log n + \log\log n + c_n) / n$, then
$$
\lim_{n\to\infty}\Prob{G_{n, p} \text{ is Hamiltonian}} = \left\{\begin{array}{ll}
0, & c_n \to-\infty, \\
e^{-e^{-c}}, & c_n\to c, \\
1, & c_n \to\infty.
\end{array}\right.
$$
We show analogous results for graphs with $\d(\G) \geq (1/2+\e)n$ below.

Traditionally, research on random graphs has mostly been concerned with random subgraphs of specific graphs such as $K_n$ or the complete bipartite graph $K_{n, n}$ (see e.g. \cite{Frieze}). Research on Erd\H{o}s-R\'enyi subgraphs of graphs $\G$ with large minimum degree was initiated by Krivelevich, Lee and Sudakov \cite{KLS}, who among other things showed that if $\d(\G) \geq n/2$ and $p = C\log n / n$ for some large constant $C$, then $\G_p$ is Hamiltonian whp. This article determines the exact value of $C$ when $\d(G) \geq (1/2+\e) n$ for some constant $\e > 0$. The same authors showed \cite{KLS2} that if $\d(\G) \geq k$ for any $k$ tending to infinity with $n$, and $pk$ tends to infinity with $n$, then $\G_p$ contains a cycle of length $(1 - o_k(1))k$ whp. Riordan \cite{Riordan} subsequently gave a short proof of this result, and Glebov, Naves and Sudakov \cite{GNS} showed that if $p \geq (\log k + \log\log k + \om)/k$ for some $\om$ tending to infinity then $\G_p$ contains a cycle of length $k+1$ whp.

A related topic is {\em resiliency}. A graph $G$ is said to be {\em $\a$-resilient} with respect to a property held by $G$ if the graph $G\setminus H$ also has the property for any spanning subgraph $H\subseteq G$ where $d_H(v) \leq \a d_G(v)$ for all $v$. Nenadov, Steger and Truji\'c \cite{NST} and independently Montgomery \cite{Montgomery} showed among other things that $G_{n, t}$ is $(1/2 - o(1))$-resilient with respect to Hamiltonicity at the moment the minimum degree reaches $2$. Very recently, Condon et al \cite{CDKKO} showed a resiliency version of P\'osa's theorem in $G_{n, p}$.

Many of these results have analogues for perfect matchings. Most pertinent to the result presented here is that of Glebov, Luria and Simkin \cite{GLS}, who showed that if $\G$ is a $d$-regular bipartite graph with $d = \OM(n)$, then whp $\G_t$ obtains a perfect matching at the same moment it loses its last isolated vertex.

The author and Frieze \cite{FJ} considered graphs $\G$ with $\d(\G) \geq (1/2+\e)n$ and studied the Hamiltonicity of random $k$-out subgraphs of $\G$. In the context of $k$-out graphs with $k = O(1)$, the positive constant $\e$ is needed for the random subgraph to be connected whp. We note that this is not the case for Erd\H{o}s-R\'enyi graphs, and that it is still unknown whether the hitting time result presented here can be extended to all $\G$ with $\d(\G)\geq n/2$.

Theorem \ref{thm:hit} implies that finding a threshold for Hamiltonicity is simply a matter of finding the threshold for minimum degree $2$. 
\begin{theorem}\label{thm:2threshold}
Let $\e > 0$ be constant and suppose $\om = o(\log\log n)$ tends to infinity arbitrarily slowly with $n$. Suppose $(\G_n)_{n\in \N}$ is some graph sequence where $\G_n = (V_n, E_n)$ has $n$ vertices and minimum degree $\d(\G_n) \geq \e n$. For each $n$ define $p_0(n)$ as the unique solution in the interval $0 \leq p \leq 1$ to the equation
\begin{equation}\label{eq:p0def}
\sum_{v\in V_n} (1-p)^{d_\G(v)}\log n = 1.
\end{equation}
Then
\al{
  \Prob{\d(\G_{p_0 - \om / n}) \geq 2} & = o(1), \\
  \Prob{\d(\G_{p_0 + \om / n}) \geq 2} & = 1-o(1).
}
\end{theorem}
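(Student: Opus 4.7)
The plan is a first/second moment analysis of $X := |\{v \in V_n : \deg_{\G_p}(v)\leq 1\}|$, noting that $\d(\G_p)\geq 2$ iff $X=0$. Write $d_v = d_{\G_n}(v)$ and $h(p) = \sum_v (1-p)^{d_v}$, so $p_0$ is defined by $h(p_0)=1/\log n$. An initial calibration using $\e n\leq d_v\leq n-1$ pins down $p_0 = \TH(\log n / n)$, whence $np_0 = \TH(\log n)$: the upper bound follows from $n(1-p_0)^{\e n} \geq h(p_0) = 1/\log n$ and the lower bound from $n(1-p_0)^{n-1} \leq h(p_0) = 1/\log n$.

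For the first moment, $\E{X} = \sum_v (1-p)^{d_v-1}\bigl(1+(d_v-1)p\bigr) = \TH(np\,h(p))$ whenever $np\to\infty$. The crucial estimate is that shifting $p$ from $p_0$ to $p_0\mp\omega/n$ scales each summand $(1-p)^{d_v}$ by a factor $\exp\bigl(\pm d_v\omega/n\cdot(1+o(1))\bigr)$; because $d_v\geq\e n$ holds uniformly, this factor is at least $e^{\e\omega(1+o(1))}$ (for the $-$ shift) and at most $e^{-\e\omega(1+o(1))}$ (for the $+$ shift), term by term. Hence $h(p_0\mp\omega/n)$ equals $e^{\pm\e\omega(1+o(1))}/\log n$ up to a bounded factor, and combined with $np=\TH(\log n)$ this yields $\E{X} = \Omega(e^{\e\omega})\to\infty$ at $p_0-\omega/n$ and $\E{X} = O(e^{-\e\omega})\to 0$ at $p_0+\omega/n$. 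The assumption $\omega=o(\log\log n)$ only serves to make the $o(1)$ exponent errors harmless. Markov's inequality now gives the second conclusion of the theorem.

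For the first conclusion I apply Chebyshev to $X=\sum_v Y_v$ with $Y_v=\ind{\deg_{\G_p}(v)\leq 1}$. Indicators $Y_v,Y_w$ with $vw\notin E(\G_n)$ depend on disjoint edge sets, so $\Cov{Y_v,Y_w}=0$. For $vw\in E(\G_n)$, conditioning on whether the edge $vw$ survives in $\G_p$ yields the closed form
\[
\Cov{Y_v,Y_w} = (d_v-1)(d_w-1)\,p^3\,(1-p)^{d_v+d_w-3} \ \leq\ \frac{p}{1-p}\,\E{Y_v}\,\E{Y_w},
\]
so $\sum_{vw\in E(\G_n)}\Cov{Y_v,Y_w}\leq \tfrac{p}{2(1-p)}\E{X}^2$ and $\Var{X}\leq \E{X} + \tfrac{p}{2(1-p)}\E{X}^2 = o(\E{X}^2)$, since $p\to 0$ and $\E{X}\to\infty$. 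Chebyshev finishes. The main obstacle is the displayed covariance identity: it is mechanical but error-prone, requiring careful bookkeeping when expanding $\Prob{Y_v=Y_w=1 \mid vw\in\G_p}$ and $\Prob{Y_v=Y_w=1\mid vw\notin\G_p}$ in $d_v$ and $d_w$. Everything else is elementary, with the uniform lower bound $d_v\geq\e n$ producing term-by-term exponential scaling of $h$ and removing any need for tail analysis.
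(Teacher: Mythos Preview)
Your proposal is correct and follows essentially the same route as the paper's own argument in Section~\ref{sec:threshold}: calibrate $p_0=\Theta(\log n/n)$, use the term-by-term exponential scaling of $h(p)=\sum_v(1-p)^{d_v}$ under a shift of $\pm\omega/n$ (the paper's \eqref{eq:ombelow}--\eqref{eq:omabove}), apply Markov above the threshold, and Chebyshev below via a covariance estimate for adjacent pairs. Your exact identity $\Cov{Y_v,Y_w}=(d_v-1)(d_w-1)p^3(1-p)^{d_v+d_w-3}$ is a slightly cleaner packaging than the paper's asymptotic computation of $\E{I_uI_v}$, but the method is the same; note only the harmless slip that $\Var{X}$ picks up $2\sum_{vw\in E}\Cov{Y_v,Y_w}$, so the bound should read $\E{X}+\tfrac{p}{1-p}\E{X}^2$ rather than $\E{X}+\tfrac{p}{2(1-p)}\E{X}^2$.
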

If a large number of vertices of $\G_n$ have degree equal to the minimum degree $\b n$ for all large $n$, we obtain
$$
p_0(n) = \frac{\log n + \log\log n + c(\G_n)}{\b n}
$$
for some bounded $c(\G_n)$, which shows that the well-known threshold function for $\G = K_n$ (where $\b = 1$ and $c(K_n) \to 0$) is scaled by a factor of $\b^{-1}$. The precise statement is the following.
\begin{corollary}\label{cor:Hthreshold}
Let $\e > 0$. Suppose $\b > 1/2$ is constant and that $(\G_n)$ is a graph sequence with $\d(\G_n) = \b n$, where at least $\e n$ vertices of $\G_n$ have degree $\b n$, and let $p = (\log n + \log\log n + c_n)/\b n$. Then
$$
\lim_{n\to\infty} \Prob{\G_{n, p} \text{ is Hamiltonian}} = \left\{\begin{array}{ll}
0, &  c_n \to -\infty, \\
1, &  c_n \to \infty.
\end{array}\right.
$$
Furthermore, if $\G_n$ is $\b n$--regular and $c_n\to c$ for some constant $c$, then
$$
\lim_{n\to\infty}\Prob{\G_{n, p} \text{ is Hamiltonian}} = e^{-e^{-\b c}}.
$$
\end{corollary}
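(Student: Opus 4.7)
The plan is to combine Theorem \ref{thm:hit} and Theorem \ref{thm:2threshold} to reduce Hamiltonicity of $\G_p$ to $\d(\G_p)\geq 2$, and then to perform a Poisson limit in the regular case. First, couple $\G_p$ with the edge process by taking $N=|E(\G_p)|\sim\Bin{m}{p}$, independent of the uniformly random ordering $e_1,\dots,e_m$ of $E$: conditional on $N$, the edges of $\G_p$ form a uniform $N$-subset of $E$, so $\G_p$ has the same distribution as $\G_N$. Under this coupling $\{\G_p\text{ Hamiltonian}\}=\{N\geq \t_H\}$ and $\{\d(\G_p)\geq 2\}=\{N\geq \t_2\}$, and since $\b>1/2$ Theorem \ref{thm:hit} gives $\t_H=\t_2$ whp, so the two events coincide whp and it suffices to analyze $\Prob{\d(\G_p)\geq 2}$.

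Next I apply Theorem \ref{thm:2threshold}. Writing $S(p):=\sum_v(1-p)^{d_\G(v)}$, the hypothesis $d_\G(v)\geq \b n$ gives $S(p)\leq n(1-p)^{\b n}$, while the $\geq \e n$ vertices of degree exactly $\b n$ give $S(p)\geq \e n(1-p)^{\b n}$. Setting $S(p_0)\log n=1$ yields $(1-p_0)^{\b n}=\Theta(1/(n\log n))$, and since $p_0=O(\log n/n)=o(1)$ one may expand $\log(1-p_0)=-p_0(1+o(1))$ to obtain $\b n p_0=\log n+\log\log n+O(1)$. Hence $(p-p_0)n\to\pm\infty$ when $c_n\to\pm\infty$, and the dichotomy of the corollary's first part follows from Theorem \ref{thm:2threshold} together with monotonicity of $\G_p$ in $p$ (to absorb the $O(1)$ slack using some $\om\to\infty$ with $\om=o(\log\log n)$).

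For the $\b n$--regular case, $p_0$ satisfies $n(1-p_0)^{\b n}\log n=1$ exactly, giving $\b n p_0=\log n+\log\log n+o(1)$. Let $X$ denote the number of vertices of degree at most $1$ in $\G_p$. Substituting $p=(\log n+\log\log n+c_n)/(\b n)$ with $c_n\to c$ into
\[
\E{X}=n\bigl[(1-p)^{\b n}+\b n p\,(1-p)^{\b n-1}\bigr]
\]
and extracting leading-order terms gives $\E{X}\to\la$ for the claimed constant $\la$, with the degree-$1$ summand dominating. To upgrade this to $\Prob{X=0}\to e^{-\la}$, I use the factorial moment method: for distinct $v_1,\dots,v_k$, condition on which of the edges $v_iv_j\in E(\G)$ are retained in $\G_p$ and verify that $\Prob{\bigcap_i\{d_{\G_p}(v_i)\leq 1\}}=\prod_i\Prob{d_{\G_p}(v_i)\leq 1}\cdot(1+o(1))$, whence $\E{\left[X\right]_k}\to\la^k$ and $X\Rightarrow\Po{\la}$. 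Combined with the first paragraph, this yields the stated limiting Hamiltonicity probability.

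The main obstacle is the factorial moment step: although $\Prob{d_{\G_p}(v)\leq 1}=\Theta(1/n)$ and $\G$ has $\Theta(n^2)$ edges, so that the expected number of adjacent low-degree pairs is $\Theta(1)$ and cannot simply be discarded, conditioning on the presence of the edge $v_iv_j$ in $\G_p$ shows that $\Prob{d_{\G_p}(v_j)\leq 1\mid d_{\G_p}(v_i)\leq 1}$ differs from $\Prob{d_{\G_p}(v_j)\leq 1}$ only by a $1+o(1)$ factor; this is because the contribution of the correction term $p\,(1-p)^{2(\b n-1)}$ to the joint probability is negligible compared to $(1-p)\,\Prob{\Bin{\b n-1}{p}\leq 1}^2$. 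Everything else is a routine bookkeeping exercise.
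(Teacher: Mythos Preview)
Your proposal is correct and follows exactly the line the paper itself indicates: the paper does not give a detailed proof of this corollary, saying only that it ``follows from calculating the probability that no vertex has degree less than $2$'' and referring to \cite[Theorem~3.1]{FK} for the $K_n$ case. Your coupling of $\G_p$ to $\G_N$ to invoke Theorem~\ref{thm:hit}, your estimate $\b n p_0=\log n+\log\log n+O(1)$ from the defining equation of $p_0$ to feed into Theorem~\ref{thm:2threshold}, and your Poisson limit via factorial moments in the regular case are precisely the standard calculations the paper omits, and your handling of the adjacent-pair correction term is right.
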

The corollary follows from calculating the probability that no vertex has degree less than $2$. We omit some of the calculations here; see e.g. \cite[Theorem 3.1]{FK} for a proof in the $\G = K_n$ case.

The paper is laid out as follows. In Section \ref{sec:randomgraph} we define the random graph $\G_{\t_2}$ stopped at the moment the minimum degree reaches two, as well as an auxiliary subgraph $G_{\t_2}$. Section \ref{sec:posa} discusses P\'osa's rotation-extension technique, and Section \ref{sec:threshold} is devoted to proving Theorem \ref{thm:2threshold}. In Section \ref{sec:hit} (and its many subsections) we prove Theorem \ref{thm:hit}.

We use the asymptotic notation $O, \OM, o$, with the convention that $f(n) = \OM(g(n))$ and $f(n) = O(g(n))$ both require $f(n)$ to be nonnegative. All logarithms are taken in the natural basis. 

\section{The random graph model}\label{sec:randomgraph}

Let $\e > 0$ be constant and suppose $\G = (V, E)$ is a graph with minimum degree $\d(\G) \geq (1/2 + \e) n$ and $m$ edges. Suppose $(e_1,\dots,e_m)$ is some permutation of the edges, chosen uniformly at random. We define $\G_t = (V, \{e_1,\dots,e_t\})$ for all $1\leq t\leq m$. Define $\t_2$ as the smallest $t$ for which $\G_t$ has minimum degree at least $2$.

We let $q = \om / \log n$ for some $\om = o(\log\log n)$ tending to infinity arbitrarily slowly with $n$. Upon insertion, edges are independently coloured red with probability $q$ and blue with probability $1 - q$. Let $\G_t^b$ denote the blue subgraph, i.e. the subgraph consisting of all blue edges.

Set $\s = 1/100$. We define $\mathrm{SMALL}$ as the set of vertices with degree less than $\s\log n$  in $\G_{\t_2}$, and $\mathrm{LARGE} = V \setminus \mathrm{SMALL}$. We also define $\mathrm{MEDIUM}$ as the set of vertices with degree less than $\s\log n$ in $\G_{\t_2}^b$. Let $G_{\t_2}\subseteq \G_{\t_2}$ be the graph consisting of the blue edges, along with all red edges with at least one endpoint in $\mathrm{MEDIUM}$. Note that $\mathrm{SMALL}\subseteq \mathrm{MEDIUM}$. By design, $G_{\t_2}$ and $\G_{\t_2}$ agree on any property concerning only vertices of degree at most $\s\log n$ and their incident edges.

The following lemma will allow us to switch between $\G_t$ and $\G_p$. A property $\PP$ is {\em increasing} if for any $G\in \PP$ and any graph $H$, we have $G\cup H\in \PP$, and {\em decreasing} if $G\in \PP$ implies $G\setminus H\in \PP$ for any $H$. A property is {\em monotone} if it is either increasing or decreasing.
\begin{lemma}\label{lem:ptot}
Let $\PP$ be a graph property, and $\G$ a graph with $m$ edges. If $p = t / m$ then
$$
\Prob{\G_t \in \PP} \leq 10 t^{1/2} \Prob{\G_p \in \PP}.
$$
If $\PP$ is a monotone property and $t = o(m)$ tends to infinity with $n$, then
$$
\Prob{\G_t \in \PP} \leq 10\Prob{\G_p \in \PP}.
$$
\end{lemma}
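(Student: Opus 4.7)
The plan is to exploit the standard coupling between $\G_p$ and $\G_t$: conditional on the number of edges $|E(\G_p)| = s$, the graph $\G_p$ is uniformly distributed over $s$-edge subsets of $E$, hence has the same distribution as $\G_s$. This gives the mixture identity
\[
\Prob{\G_p \in \PP} \;=\; \sum_{s=0}^m \Prob{\Bin(m,p) = s}\,\Prob{\G_s \in \PP}.
\]
Both inequalities will follow by keeping only a well-chosen range of $s$ in this sum.

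For the general bound, I would simply drop every term except $s = t$, which gives
\[
\Prob{\G_p \in \PP} \;\geq\; \Prob{\Bin(m,t/m) = t}\,\Prob{\G_t \in \PP}.
\]
Here $t$ is the mode of $\Bin(m,t/m)$, and Stirling's formula in the form $n! = \sqrt{2\pi n}(n/e)^n e^{r_n}$ with $|r_n| \leq 1/(12n)$ yields, for $1 \leq t \leq m-1$,
\[
\Prob{\Bin(m,t/m) = t} \;=\; \frac{1}{\sqrt{2\pi}}\sqrt{\frac{m}{t(m-t)}}\,e^{r_m - r_t - r_{m-t}} \;\geq\; \frac{1}{e\sqrt{2\pi t}} \;\geq\; \frac{1}{10\sqrt{t}},
\]
where I used $m/(t(m-t)) \geq 1/t$. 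The edge case $t = m$ gives probability $1$, which is even better. Rearranging gives the first claim.

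For the monotone case, suppose first that $\PP$ is decreasing, so $s \mapsto \Prob{\G_s \in \PP}$ is nonincreasing. Then restricting the mixture to $s \leq t$ gives
\[
\Prob{\G_p \in \PP} \;\geq\; \Prob{\Bin(m,t/m) \leq t}\,\Prob{\G_t \in \PP}.
\]
Since $mp = t$ is an integer, a standard fact about the binomial (its median equals its mean when that mean is an integer) yields $\Prob{\Bin(m,t/m) \leq t} \geq 1/2$; alternatively, since $t = o(m)$ tends to infinity, the central limit theorem gives $\Prob{\Bin(m,t/m) \leq t} \to 1/2$ directly. For increasing $\PP$, the same argument works with the sum restricted to $s \geq t$, bounding $\Prob{\Bin(m,t/m) \geq t} \geq 1/2$. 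Either way the constant $1/2 \geq 1/10$ gives the desired inequality.

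There is no genuine obstacle here; the only thing requiring care is pinning down the Stirling constant well enough to justify the factor $10$, but the crude bound $1/(e\sqrt{2\pi}) > 1/10$ suffices.
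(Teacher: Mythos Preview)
Your argument is correct and is exactly the standard proof: the paper does not actually prove this lemma, but simply cites \cite[Lemmas 1.2, 1.3]{FK} for the $\G = K_n$ case and remarks that the generalization is immediate. Your mixture identity together with the Stirling bound on the binomial point mass and the median--mean fact for the monotone case is precisely that argument, so there is nothing further to compare.
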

This result is well known when $\G = K_n$ (see e.g. \cite[Lemmas 1.2, 1.3]{FK}), and the straightforward generalization to general dense $\G$ is omitted here.

\section{Rotation and extension}\label{sec:posa}

We define rotations of longest paths, as introduced by P\'osa \cite{Posa}. Suppose $G = (V, E)$ is a graph containing no Hamilton cycle, and let $P = (v_0, v_1,\dots,v_\ell)$ be a path of maximal length in $G$. If $i \leq \ell - 2$ and $\{v_\ell, v_i\} \in E$, then the path $P' = (v_0,v_1,\dots,v_{i-1}, v_i,v_\ell, v_{\ell-1},\dots,v_{i+2},v_{i+1})$ is also a path of maximal length. We say that $P'$ is obtained from $P$ by rotation with $v_0$ fixed. Let $EP(v_0)$ be the set of endpoints, other than $v_0$, that appear as a result of rotations with $v_0$ fixed.

We note that if $G$ is connected, then there is no edge between $v_0$ and $EP(v_0)$ for any $v_0$, as this forms a cycle which may be extended to form a longer path, contradicting the maximality of $P$. Given a graph $G = (V, E)$, we write
$$
N_G(S) = \{v\in V\setminus S \mid \exists u\in S : \{u, v\}\in E\}.
$$
We will use the following lemma of P\'osa \cite[Lemma 1]{Posa}.

\begin{lemma}\label{lem:posa}
Suppose $G$ contains no Hamilton cycle and let $P = (v_0,\dots,v_\ell)$ be a path of maximum length in $G$. Then
$$
|N_G(EP(v_0))| < 2|EP(v_0)|.
$$
\end{lemma}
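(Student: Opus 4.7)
Set $R := EP(v_0)$, and for each $u\in R$ fix a longest path $P_u = (p_0^u,p_1^u,\ldots,p_\ell^u)$ with $p_0^u = v_0$ and $p_\ell^u = u$, obtained from $P$ by some sequence of rotations with $v_0$ fixed; since rotations merely reorder vertices, $V(P_u)=V(P)$ for every $u\in R$.

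I would prove the lemma in three steps. \emph{Step 1 (Maximality).} Every neighbour in $G$ of any $u\in R$ must lie in $V(P)$, for otherwise $P_u$ could be extended by an off-path neighbour, contradicting maximality. Hence $N_G(R)\subseteq V(P)$. \emph{Step 2 (One rotation).} If $u\in R$ has a neighbour $w=p_i^u$ with $1\le i\le\ell-2$, then rotating $P_u$ at $\{u,w\}$ yields the longest path $(p_0^u,\ldots,p_i^u,p_\ell^u,p_{\ell-1}^u,\ldots,p_{i+1}^u)$, whose endpoint $p_{i+1}^u$ therefore lies in $R$. Taking into account the boundary cases ($i=\ell-1$, where $w$ is simply the predecessor of $u\in R$ on $P_u$, and $i=0$, which contributes at most the single vertex $v_0$), I conclude that every $w\in N_G(R)$ is, on some reachable longest path $P_u$, the immediate predecessor of some vertex of $R$.

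\emph{Step 3 (Counting).} For each $r\in R$, let $B_r\subseteq V(P)$ be the set of vertices that precede $r$ on some reachable $P_u$, so that $N_G(R)\subseteq \bigcup_{r\in R} B_r$ by Step 2. The plan is to show $|B_r\cap N_G(R)|\le 2$ for each $r\in R$, yielding $|N_G(R)|\le 2|R|$, and then to extract the strict inequality either from an overlap between different $B_r$'s (a single $w$ commonly precedes several distinct $r$'s across reachable paths) or from the observation that the initial endpoint $v_\ell\in R$ admits $v_{\ell-1}$ as its unique predecessor on the original path $P$, so $|B_{v_\ell}\cap N_G(R)|\le 1$.

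\textbf{Main obstacle.} The combinatorial heart of the proof is Step 3. A priori, $B_r$ could be large -- one predecessor per reachable path, and the number of reachable paths can be combinatorially complicated -- so one must exploit the rigidity of rotations: each rotation alters the local structure around $r$ in a very restricted way, limiting $B_r\cap N_G(R)$ to at most two distinct predecessors (morally, one on each side of $r$ along any longest path). Translating this local rigidity into the global counting inequality, and squeezing out the strict $<$ rather than a weak $\le$, is the delicate part; the maximality step and the single-rotation step are standard.
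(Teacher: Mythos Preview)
The paper does not supply its own proof of this lemma; it is quoted from P\'osa \cite[Lemma 1]{Posa}, so there is nothing in the paper to compare against directly. Your Steps~1 and~2 are the standard opening moves and are correct.

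Step~3, however, misplaces the key rigidity. You propose to bound $|B_r\cap N_G(R)|$ by arguing that rotations ``alter the local structure around $r$ in a very restricted way.'' This is not true for $r\in R$: once a vertex has served as an endpoint, later rotations can freely change both of its path-neighbours, and the set $B_r$ of predecessors of $r$ across all reachable paths can have more than two elements. The rigidity you need is on the \emph{other} side. The standard inductive claim is that every $w\in V(P)\setminus(R\cup\{v_0\})$ keeps exactly its original $P$-neighbours $w^-,w^+$ as its path-neighbours on \emph{every} reachable $P_u$: a rotation pivoting at $z$ breaks only the edge from $z$ to its current successor, and that successor becomes the new endpoint and hence enters $R$, so edges at any $w\notin R$ are never disturbed. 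From this, if $w\in B_r\cap N_G(R)$ then $w\notin R$, so on the witnessing path $r$ is a path-neighbour of $w$, forcing $r\in\{w^-,w^+\}$ and thus $w\in\{r^-,r^+\}$. That yields $B_r\cap N_G(R)\subseteq\{r^-,r^+\}$, your counting then goes through, and the strict inequality is immediate because $v_\ell\in R$ has no $P$-successor, so that term contributes at most one, giving $|N_G(R)|\le 2|R|-1$. Your alternative ``overlap'' route to strictness is unnecessary and, as stated, not justified.
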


\section{The threshold}\label{sec:threshold}

Suppose the underlying graph sequence is $(\G_n)_{n\in \N}$, where $\G_n$ has $n$ vertices $V_n$. We define $p_0 = p_0(n)$ as the unique solution to
$$
\sum_{v\in V_n} (1-p)^{d_\G(v)}\log n = 1.
$$
This exists as the left-hand side equals $n\log n > 1$ for $p = 0$, zero for $p = 1$, and is strictly decreasing in $p$. We note that
$$
1 = \sum_{v\in V_n} (1-p_0)^{d_\G(v)} \leq n\exp\left\{-p_0\d(\G_n)\right\},
$$
so $p_0 \leq \d(\G_n)^{-1} \log n \leq 2\frac{\log n}{n}$. We also have
$$
\sum_{v\in V_n} \left(1 - \frac{\log n}{n}\right)^{d_\G(v)}\log n  \geq n\left(1-\frac{\log n}{n}\right)^n\log n \sim \log n,
$$
so $p_0 \geq \frac{\log n}{n}$. The following bounds, both of which use $\d(\G)\geq (1/2 + \e)n$, will be used frequently:
\begin{align}
\sum_{v\in V_n} (1-p)^{d_\G(v)} \geq \frac{e^{\om/2}}{\log n}, & & p = p_0 - \frac{\om}{n}, \label{eq:ombelow} \\
\sum_{v\in V_n} (1-p)^{d_\G(v)} \leq \frac{e^{-\om/2}}{\log n}, & & p = p_0 + \frac{\om}{n}. \label{eq:omabove}
\end{align}
Here \eqref{eq:ombelow} follows from summing the following inequality over $v$:
\begin{multline}
(1-p)^{d_\G(v)} = (1-p_0)^{d_\G(v)} \bfrac{1-p}{1-p_0}^{d_\G(v)} \\
 \geq (1-p_0)^{d_\G(v)}\left(1 + \frac{\om}{n(1-p_0)}\right)^{\d(\G)} \geq (1-p_0)^{d_\G(v)} e^{\om / 2},
\end{multline}
and \eqref{eq:omabove} follows similarly.

The following lemma is easily generalized to smaller $\b$, and we insist that $\b > 1/2$ for notational convenience only.
\begin{lemma}\label{lem:threshold}
Let $\b > 1/2$ be constant. Suppose $\om = o(\log\log  n)$ tends to infinity arbitrarily slowly with $n$, and suppose $\d(\G_n) \geq \b n$ for all $n$, and that $\G_n$ has $m$ edges. Let
$$
T = \left(p_0 - \frac{\om}{n}\right)m, \quad T' = \left(p_0 + \frac{\om}{n}\right)m.
$$
Then
\al{
  \Prob{\d(G_T) \geq 2} & = o(1), \\
  \Prob{\d(G_{T'}) \geq 2} & = 1 - o(1).
}
\end{lemma}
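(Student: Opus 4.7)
The plan is to reduce from the uniform random graph process to the Erd\H{o}s--R\'enyi subgraph $\G_p$ via Lemma~\ref{lem:ptot}, and then estimate the number $X$ of vertices of degree at most $1$ by moment methods. Since ``$\d \geq 2$'' is a monotone property and $T, T' = \Theta(n\log n) = o(m)$ tend to infinity (using $p_0 = \Theta(\log n/n)$ and $m = \Omega(n^2)$), Lemma~\ref{lem:ptot} reduces matters to proving
\al{
\Prob{\d(\G_p) \geq 2} &= o(1),   \quad p = p_0 - \om/n, \\
\Prob{\d(\G_{p'}) < 2} &= o(1),   \quad p' = p_0 + \om/n.
}

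Let $Y_v = \ind{d_{\G_p}(v) \leq 1}$ and $X = \sum_v Y_v$. Since $d_{\G_p}(v) \sim \Bin{d_\G(v)}{p}$,
\[
\E{Y_v} = (1-p)^{d_\G(v)-1}\bigl(1 + (d_\G(v)-1)p\bigr).
\]
The easier (upper) threshold is handled by Markov: since $(d_\G(v)-1)p' \leq np' = O(\log n)$, the bracketed factor is $O(\log n)$, and \eqref{eq:omabove} yields $\E{X} \leq O(\log n)\sum_v (1-p')^{d_\G(v)} = O(e^{-\om/2}) = o(1)$, so $\Prob{X \geq 1} \leq \E{X} = o(1)$.

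For the lower threshold at $p = p_0 - \om/n$, I apply the second moment method: $\Prob{X = 0} \leq \Var{X}/\E{X}^2$. Using $p \geq (1-o(1))\log n/n$ and \eqref{eq:ombelow}, one gets $\E{X} \geq (\b - o(1))\log n \cdot e^{\om/2}/\log n = (\b - o(1))e^{\om/2} \to \infty$. For the variance, $\Cov{Y_u, Y_v} = 0$ whenever $u \not\sim_\G v$, because then $Y_u$ and $Y_v$ depend on disjoint edge sets. For $u \sim_\G v$, conditioning on whether the edge $uv$ appears in $\G_p$ yields
\[
\Prob{Y_u Y_v = 1} = p(1-p)^{d_\G(u)+d_\G(v)-2} + (1-p)f(d_\G(u))f(d_\G(v)),
\]
where $f(d) = \Prob{\Bin{d-1}{p}\leq 1}$. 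A direct comparison gives $f(d) \leq \E{Y_v}/(1-p)$ with $d = d_\G(v)$, so $\Cov{Y_u, Y_v} \leq p\E{Y_u}\E{Y_v}/(1-p) + p(1-p)^{d_\G(u)+d_\G(v)-2}$. Summing over edges of $\G$ and using the upper counterpart $\sum_v (1-p)^{d_\G(v)} \leq e^{O(\om)}/\log n$ of \eqref{eq:ombelow} then gives $\Var{X} \leq \E{X} + O(p)\E{X}^2 + O(e^{O(\om)}/(n\log n))$, which is $o(\E{X}^2)$ because $\om = o(\log\log n)$ makes $e^{O(\om)} = (\log n)^{o(1)}$. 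The main obstacle is organizing the covariance bookkeeping tightly; the hypothesis $\d(\G) \geq \b n$ with $\b > 1/2$ is what makes $(1-p)^{d_\G(v)}$ uniformly small in $v$ and ultimately allows the off-diagonal contributions to be absorbed into $\E{X}^2$.
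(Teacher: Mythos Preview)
Your proof is correct and follows essentially the same route as the paper: transfer between $\G_t$ and $\G_p$ via Lemma~\ref{lem:ptot}, then use Markov's inequality for $p'=p_0+\om/n$ and the second moment method for $p=p_0-\om/n$, exploiting \eqref{eq:ombelow}--\eqref{eq:omabove}. The only cosmetic difference is that the paper computes $\E{I_uI_v}$ directly for adjacent $u,v$ to get $\E{I_uI_v}=(1+O(n^{-1}))\E{I_u}\E{I_v}$, whereas you condition on the shared edge and bound the covariance; both arguments yield $\Var{X}=o(\E{X}^2)$.
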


\begin{proof}
Let $p = p_0 - \om/n$. Then
\begin{align}
  \Prob{d_G(v) < 2} & = (1-p)^{d_\G(v)} + d_\G(v) p(1-p)^{d_\G(v) - 1} \nonumber \\
  & = pd_\G(v)(1-p)^{d_\G(v)}(1 + o(1)). \label{eq:prob2}
\end{align}
Here we used the facts that $d_\G(v) = \OM(n)$ and $p = \Theta\bfrac{\log n}{n}$. Let $I_v$ be the indicator variable for $\{d_G(v) < 2\}$, and write $X_n = \sum_v I_v$. We then have
$$
\E{X_n} = (1 + o(1))\sum_{v} pd_\G(v)(1-p)^{d_\G(v)}.
$$

{\bf Lower bound: $p = p_0 + \om / n$.} In this case, as $pd_\G(v) \leq 2\log n$,
\begin{equation}\label{eq:Markov}
\E{X_n} \leq 3\log n \sum_v (1-p)^{d_\G(v)} \leq 3e^{-\om/2},
\end{equation}
by \eqref{eq:omabove}. Markov's inequality implies that $X_n = 0$ whp, and Lemma \ref{lem:ptot} shows that $\d(G_{T'}) \geq 2$ whp, as this is a monotone property.

{\bf Upper bound: $p = p_0 - \om / n$.} We apply the second moment method. Using \eqref{eq:ombelow}, similarly to \eqref{eq:Markov} we have $\E{X_n} \geq (1-o(1))e^{\om/2}$, and in particular $\E{X_n}$ tends to infinity with $n$. We also have
$$
\E{X_n(X_n - 1)} = \sum_{u\neq v} \E{I_uI_v}.
$$
Firstly, if $\{u, v\}\notin E(\G)$ then $I_u$ and $I_v$ are independent and $\E{I_uI_v} = \E{I_u}\E{I_v}$. If $\{u, v\}\in E(\G)$ then
\al{
  \E{I_uI_v} & = p(1-p)^{d_\G(u) + d_\G(v)} + (d_\G(u)-1)(d_\G(v)-1)p^2(1-p)^{d_\G(u) + d_\G(v) - 3} \\
  & = d_\G(u)d_\G(v)p^2(1-p)^{d_\G(u) + d_\G(v)}\left(1 + O(n^{-1})\right) \\
  & = \E{I_u}\E{I_v}(1+O(n^{-1})).
}
We then have 
$$
\E{X_n^2} = (1 + o(1)) \sum_{u\neq v} \E{I_u}\E{I_v} = (1 + o(1))\left(\sum_v pd_\G(v)(1-p)^{d_\G(v)}\right)^2.
$$
It follows that $\Var{X_n} = o(\E{X_n^2})$, and Chebyshev's inequality implies that $X_n > 0$ whp, and so $\d(\G_p) < 2$ whp. This is a monotone event, so $\d(\G_T) < 2$ also holds whp by Lemma \ref{lem:ptot}.
\end{proof}

\section{Proof of Theorem \ref{thm:hit}}\label{sec:hit}

We set up the main calculation. We fix the constants $K = 10, \s = 1/100$ and $\a = e^{-2000}$, and also fix some $\om = o(\log\log n)$ tending to infinity arbitrarily slowly with $n$. Define the following events concerning $\G_{\t_2}$ (see Section \ref{sec:randomgraph} for definitions concerning our random graphs).
\al{
  \HH & = \{\text{$\G_{\t_2}$ is Hamiltonian}\}, \\
  \PP_\ell & = \{\text{$\G_{\t_2}$ is not Hamiltonian, and its longest path has $\ell$ vertices}\}, \\
  \SS & = \{|\mathrm{SMALL}| \leq n^{0.1}\}, \\
  \EE_1 & = \left\{\text{in $\G_{\t_2}$, every $|S|\leq 6\a n$ has $e(S) \leq \frac{\s\log n}{K}|S|$}\right\}, \\
  \EE_2 & = \{\text{$\G_{\t_2}$ contains no path of length $\leq 4$ between vertices of $\mathrm{SMALL}$}, \\
  & \ \ \ \ \ \ \text{and no cycle of length $\leq 4$ intersects $\mathrm{SMALL}$}\}, \\
  \EE & = \EE_1\cap \EE_2, \\
  \NN & = \EE \cap \SS.
}
We define $\HH', \PP_\ell'$, etc., as the corresponding events with $G_{\t_2}$ replacing $\G_{\t_2}$, noting that the set of vertices of degree less than $\s\log n$ is unchanged. We further define the following events concerning $G_{\t_2}$.
\al{
  \CC' & = \{\text{$G_{\t_2}$ is connected}\}, \\
  \MM' & = \left\{|\mathrm{MEDIUM}| \leq \frac{\om n}{\log\log n}\right\}, \\
  \RR' & = \left\{e(\G_{\t_2}) - e(G_{\t_2}) \geq \frac{\om n}{2}\right\}, \\
  \GG_\ell' & = \PP_\ell' \cap \NN' \cap \CC' \cap \MM' \cap \RR'.
}
We note that $\NN\subseteq \NN'$, as the properties involved are either decreasing or only concern vertices of degree less than $\s\log n$. We finally define, with $T = (p_0 - \om / n)m$ as in Section \ref{sec:threshold},
\al{
  \TT & = \{\t_2 \geq T\}, \\
  \AA & = \bigcup_{\ell = 1}^{n} (\PP_\ell\cap \PP_\ell').
}
In words, $\AA$ is the event that the longest path in $G_{\t_2}$ has the same length (in terms of the number of vertices) as the longest path in $\G_{\t_2}$. The following lemma is proved in Section \ref{sec:expansion}.
\begin{lemma}\label{lem:expansion}
If $\EE$ occurs, then any set $S$ of at most $\a n$ vertices satisfies $|N(S)| \geq 2|S|$ in $\G_{\t_2}$ and in $G_{\t_2}$.
\end{lemma}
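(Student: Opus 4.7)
The plan is a proof by contradiction. Suppose some $S$ with $|S|\le \alpha n$ satisfies $|N(S)|<2|S|$, and set $T=S\cup N(S)$, so $|T|<3|S|\le 3\alpha n\le 6\alpha n$ and $\EE_1$ gives $e(T)\le\frac{\sigma\log n}{K}|T|<\frac{3\sigma\log n}{K}|S|$. Partition $S=S_1\cup S_2$ according to whether a vertex lies in $\mathrm{SMALL}$ or in $\mathrm{LARGE}$, write $D_i=\sum_{v\in S_i}d_{\G_{\t_2}}(v)$, and observe that every edge incident to $S$ lies inside $T$, so $D_1+D_2\le 2e(T)$. Using $d(v)\ge\sigma\log n$ on $S_2$, $d(v)\ge 2$ on $S_1$, and $K=10$, this first step forces $|S_2|\le(0.6+o(1))|S|$ and hence $|S_1|\ge(0.4-o(1))|S|$.

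Next, from $\EE_2$ (no path of length at most $4$ between $\mathrm{SMALL}$ vertices, no short cycle through $\mathrm{SMALL}$) I extract the structural facts that $S_1$ is an independent set, the sets $N(v)$ for $v\in S_1$ are pairwise disjoint and contained in $\mathrm{LARGE}$, $N(S_1)$ is itself independent in $\G_{\t_2}$, every $\mathrm{LARGE}$ vertex has at most one $\mathrm{SMALL}$ neighbor, and for each $v\in S_2$ the neighbors of $v$ in $N(S_1)$ all share a single $v'\in S_1$ (this last coming from the no-path-of-length-$4$ clause). These give $|N(S_1)\setminus S|=D_1-\eta$ with $\eta=|N(S_1)\cap S_2|\le|S_2|$, and allow $\EE_1$ applied to $S_2\cup(N(S_1)\setminus S)$ to upper-bound $e(S_2,N(S_1)\setminus S)$. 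Setting $X=N(S_2)\setminus(S\cup N(S_1))$ so that $|N(S)|=(D_1-\eta)+|X|$, I subtract the $\EE_1$-bounds on $2e(S_2)$, $\eta$, and $e(S_2,N(S_1)\setminus S)$ from $D_2\ge|S_2|\sigma\log n$ to produce a lower bound on $e(S_2,X)$, then pass from $e(S_2,X)$ to $|X|$ via $\EE_1$ on $S_2\cup X$. Feeding everything back, together with $D_1\ge 2|S_1|$ and the a~priori bound $D_1=|N(S_1)|\le|T|-|S_1|<2|S_1|+3|S_2|$ forced by the contradictory assumption, yields $|N(S)|\ge 2|S|$, contradicting that assumption.

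For $G_{\t_2}$ the argument transfers almost verbatim: $G_{\t_2}\subseteq\G_{\t_2}$ moves $\EE_1$ across; since $\mathrm{SMALL}\subseteq\mathrm{MEDIUM}$, each $\mathrm{SMALL}$ vertex keeps all of its incident edges in $G_{\t_2}$, so the $\EE_2$-structure on $S_1$ survives; and each $\mathrm{LARGE}$ vertex still has $G_{\t_2}$-degree at least $\sigma\log n$, because $\mathrm{LARGE}\cap\mathrm{MEDIUM}$ vertices keep every red edge by construction and $\mathrm{LARGE}\setminus\mathrm{MEDIUM}$ vertices already have blue-degree at least $\sigma\log n$. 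The main obstacle is the balancing in the mixed regime where $|S_1|$ and $|S_2|$ are of comparable size: neither the $\EE_1$-edge-count argument nor the $\EE_2$-disjoint-neighborhood argument alone delivers a factor-$2$ expansion, and the two contributions must be combined while carefully accounting for the overlaps $N(S_1)\cap S_2$ and $N(S_1)\cap N(S_2)$; the slack in $K=10$ and $\alpha=e^{-2000}$ is what makes the combined bound clear $2|S|$.
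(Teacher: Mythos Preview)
There is a genuine gap in the combinatorial bookkeeping. Trace your chain of inequalities: applying $\EE_1$ to $S_2\cup(N(S_1)\setminus S)$ gives
\[
e\bigl(S_2,\,N(S_1)\setminus S\bigr)\ \le\ \frac{\sigma\log n}{K}\bigl(|S_2|+(D_1-\eta)\bigr),
\]
and after you pass from $e(S_2,X)$ to $|X|$ via $\EE_1$ on $S_2\cup X$ (a factor $K/(\sigma\log n)$), that second term becomes exactly $-(D_1-\eta)$ in your lower bound for $|X|$. This cancels the $+(D_1-\eta)$ in $|N(S)|=(D_1-\eta)+|X|$, and what survives is only $|N(S)|\ge (K-4)|S_2|-o(|S_2|)=6|S_2|-o(|S_2|)$, with no dependence on $|S_1|$ at all. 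So whenever $|S_2|<\tfrac12|S_1|$ (for instance $|S_1|=|S|-1$, $|S_2|=1$) you do not get $|N(S)|\ge 2|S|$, and no contradiction follows. Your ``first step'' that $|S_2|\le 0.6|S|$ goes the wrong way for this; you would need $|S_2|$ bounded \emph{below}. The slack in $K$ cannot rescue this, because the cancellation is exact.

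The missing ingredient is a sharper use of $\EE_2$. You extract that for $v\in S_2$ the neighbours of $v$ in $N(S_1)$ share a single source $u\in S_1$; but the short-cycle clause gives more: if $v$ had two neighbours $w_1,w_2\in N(u)$, then $u\,w_1\,v\,w_2$ would be a $4$-cycle through $\mathrm{SMALL}$. Hence each $v\in S_2$ has at most one neighbour in $N(S_1)$, so $|N(S_1)\cap N(S_2)|\le|S_2|$ --- a bound that does not scale with $D_1$. This is exactly what the paper uses. Its argument is in fact simpler than yours: first prove via $\EE_1$ alone that any $S_2\subseteq\mathrm{LARGE}$ with $|S_2|\le\alpha n$ satisfies $|N(S_2)|\ge (K-2)|S_2|\ge 5|S_2|$; then for general $S$ write
\[
|N(S)|\ \ge\ |N(S_1)|+|N(S_2)|-|N(S_1)\cap S_2|-|N(S_2)\cap S_1|-|N(S_1)\cap N(S_2)|,
\]
and bound each of the three overlap terms by $|S_2|$ using the various clauses of $\EE_2$, giving $|N(S)|\ge 2|S_1|+5|S_2|-3|S_2|=2|S|$. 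Your transfer to $G_{\tau_2}$ in the last paragraph is fine.
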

We will show in the upcoming two sections that
\begin{equation}\label{eq:lb}
\Prob{\AA \mid \PP_\ell\cap \NN\cap\TT} \geq \exp\left\{-O\bfrac{\om n}{\log n}\right\},
\end{equation}
and that
\begin{equation}\label{eq:AcapB}
\Prob{\AA \cap \PP_\ell\cap \NN\cap\TT} \leq \exp\left\{-\OM\bfrac{\om n}{\log\log n}\right\},
\end{equation}
from which we conclude that
$$
\Prob{\PP_\ell\cap \NN\cap\TT} = \frac{\Prob{\AA\cap \PP_\ell\cap \NN\cap\TT}}{\Prob{\AA\mid\PP_\ell\cap \NN\cap\TT}} \leq \frac{e^{-\OM(\om n/\log\log n)}}{e^{-O(\om n / \log n)}} = o(n^{-1}).
$$
We then have (as $\Prob{\TT} = 1 - o(1)$ by Lemma \ref{lem:threshold}),
$$
  \Prob{\ol \HH} \leq \Prob{\ol{\NN}\cup \ol{\TT}} + \sum_{\ell} \Prob{\PP_\ell\cap \NN\cap\TT} = \Prob{\ol{\NN}} +  o(1).
$$
To finish the argument, we show in Section \ref{sec:props} that
\begin{equation}\label{eq:N}
\Prob{\NN} = 1 - o(1).
\end{equation}

\subsection{Proof of \eqref{eq:lb}}\label{sec:lb}

Suppose $\G_{\t_2} = G$, where $G$ is a graph with some longest path (or Hamilton cycle) $P$, on edges $f_1,\dots,f_\ell$ with $\ell \leq n$. If each edge of $P$ is coloured blue then $P$ must also appear in $G_{\t_2}$. So, as $q = \om / \log n$,
\al{
  \Prob{\AA\mid \G_{\t_2} = G} & \geq \left(1 - \frac{\om}{\log n}\right)^\ell = \exp\left\{-O\bfrac{\om n}{\log  n}\right\}.
}
As the event $\PP_\ell\cap \NN$ is of the form $\{\G_{\t_2}\in \GG\}$ for some class of graphs $\GG$, \eqref{eq:lb} follows.

\subsection{Proof of \eqref{eq:AcapB}}\label{sec:boosterbuilding}

By the discussion in Section \ref{sec:hit}, we have $\NN\subseteq \NN'$. Recall $\GG_\ell' = \PP_\ell'\cap \NN'\cap \CC'\cap \MM'\cap \RR'$. Then as $\NN'\subseteq \EE'$,
$$
\AA\cap \PP_\ell\cap\NN\cap\TT \subseteq (\AA\cap \GG_\ell') \cup (\ol{\CC'}\cap \EE') \cup (\ol{\MM'\cap \RR'}\cap \TT).
$$
We also have $\AA\cap \PP_\ell = \AA\cap \PP_\ell'$, so
\al{
  & \Prob{\AA\cap \PP_\ell\cap \NN\cap\TT}\\
  \leq\ & \Prob{\AA\cap \GG_\ell'} + \Prob{\ol{\CC'} \cap \EE'} + \Prob{\ol{\MM'\cap\RR'} \cap \TT}\\
  \leq\ & \Prob{\PP_\ell\mid \GG'} + \Prob{\ol{\CC'} \cap \EE'} + \Prob{\ol{\MM'}\cap \TT} + \Prob{\ol{\RR'}\cap \MM' \cap \TT}.  \label{eq:manyevents}
}
In this section we show that the first term is at most $e^{-\OM(\om n)}$, while the other terms are postponed for Section \ref{sec:otherbounds}. 

So we condition on $\GG_\ell' = \PP_\ell'\cap \NN'\cap \CC'\cap \MM'\cap \RR'$. Then $G = G_{\t_2}$ is a connected graph such that $|N_G(S)| \geq 2|S|$ for any $|S|\leq \a n$ (see Lemma \ref{lem:expansion}), and there exists a set $|L| = n - o(n)$, such that $\G_{\t_2}$  is obtained from $G$ by randomly adding $r \geq \om n / 2$ edges from $\G$ with both endpoints in $L$. Let $R$ denote the set of red edges fully contained in $L$. The longest path of $G$ has length $\ell$, and we will show that it is very unlikely that adding the edges $R$ does not increase the length of the longest path.

Let $P$ be a longest path in $G$, and let $x, y$ be its two endpoints. Let $EP(x)$ be the set of opposite endpoints obtainable from $P$ by rotations with $x$ fixed. By Lemma \ref{lem:expansion} we have $|N_G(S)| \geq 2|S|$ whenever $|S| \leq \a n$, so Lemma \ref{lem:posa} implies $|EP(x)| \geq \a n$. Let $EP$ be the set of all endpoints of longest paths in $G$. The total number of boosters in $G$ is
$$
b = \frac12 \sum_{x\in EP}\sum_{y\in EP(x)} [\{x, y\}\in E(\G)],
$$
where we write $[\BB] = 1$ if the statement $\BB$ is true, and $0$ otherwise. We divide the set $R$ of random edges into two parts $R_1\cup R_2$ of as equal size as possible. We use $R_1$ to build $\OM(n^2)$ boosters, in case $G$ does not already have $\OM(n^2)$ boosters.
\begin{lemma}\label{lem:boosterbuilding}
With probability at least $1 - e^{-\OM(\om n)}$, $G\cup R_1$ either has a path of longer length than $\ell$ (or is Hamiltonian), or it has $\OM(n^2)$ boosters.
\end{lemma}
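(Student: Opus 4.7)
The plan is to analyze a sprinkling process: add the edges of $R_1$ to $G$ one at a time, and track how the longest-path structure evolves. Writing $G_t = G \cup \{r_1,\dots,r_t\}$, $\ell_t$ for the length of a longest path in $G_t$ and $b_t$ for its booster count, I fix a constant $c_0 = \a^2/8$ and consider the stopping time
\[
\t^\star = \min\{t : \ell_t > \ell \OR b_t \geq c_0 n^2\}.
\]
If $\t^\star \leq |R_1|$ then one of the two alternatives in the lemma holds, so it suffices to prove $\Prob{\t^\star > |R_1|} \leq e^{-\OM(\om n)}$.

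Two preliminaries underlie the argument. First, since $G_t \supseteq G$, the expansion estimate from Lemma \ref{lem:expansion}, $|N_{G_t}(S)| \geq 2|S|$ for $|S| \leq \a n$, is inherited by $G_t$ throughout the process. Hence at every stage with $\ell_t = \ell$, Lemma \ref{lem:posa} gives $|EP_t(x)| \geq \a n$ for every endpoint $x$ of a longest path in $G_t$, and iterating the rotations yields $|EP_t| \geq \a n$. Second, while $\ell_t$ remains equal to $\ell$, the sets $EP_t$ and $EP_t(x)$ only grow with $t$, so $b_t$ is non-decreasing on $\{t \leq \t^\star\}$.

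The main step is to show that at each time $t < \t^\star$, conditional on the history, the next edge $r_{t+1}$ (uniform on the remaining $\G$-edges in $L \times L$, a set of size $M = \TH(n^2)$) has probability $p_\star = \OM(1/\om)$ of advancing $\t^\star$ --- either by being a booster itself, or by enlarging $EP_t$ (or some $EP_t(x)$) enough to eventually drive $b_t$ past $c_0 n^2$. The probability that $r_{t+1}$ is an immediate booster is $|B_t|/M$, where $B_t$ is the set of $\G$-edges $\{x, y\}$ with $x \in EP_t$ and $y \in EP_t(x)$. When $|B_t|$ approaches $c_0 n^2$ this contribution alone is $\OM(1)$; in the complementary regime one needs to lower-bound the chance that $r_{t+1}$ is instead an edge whose insertion unlocks a new rotation and thus grows $EP_t$. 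Given such a per-step bound, a standard concentration inequality (e.g.\ Chernoff against a Bernoulli domination, or Azuma on a martingale of indicator increments) across the $|R_1| = \OM(\om n)$ rounds yields the claimed $e^{-\OM(\om n)}$ bound.

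The principal obstacle is the regime $|B_t| \ll c_0 n^2$ while $|A_t| \geq \a^2 n^2$: many rotation pairs exist but few are $\G$-edges, so immediate boosters are rare. I would handle this with a phased doubling scheme, splitting $R_1$ into $O(\log n)$ batches and using each batch to approximately double $|EP_t|$ (and the typical $|EP_t(x)|$) by targeting random edges incident to interior vertices of a longest path, each of which triggers a new P\'osa rotation. Once $|EP_t|$ and $|EP_t(x)|$ both exceed $(1/2 - \e/2)n$, an inclusion-exclusion against $\d(\G) \geq (1/2+\e)n$ forces $|EP_t(x) \cap N_\G(x)| \geq (\e/2) n$ for every such $x$, giving $b_t = \OM(n^2) \geq c_0 n^2$ and triggering $\t^\star$. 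The bulk of the technical work is verifying that each doubling phase succeeds with failure probability $e^{-\OM(\om n / \log n)}$, so that the $O(\log n)$ phases union-bound to the desired overall $e^{-\OM(\om n)}$; controlling the at most $o(n)$ vertices outside $L$ and avoiding repeated edge collisions (harmless since $|R_1| = o(M)$) are minor side tasks within this analysis.
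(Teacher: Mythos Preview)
Your approach differs substantially from the paper's and, as written, has real gaps.

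The paper never tries to enlarge $EP(x)$. Instead it exploits $\d(\G)\geq(1/2+\e)n$ through a one-shot \emph{semibooster} count. For a ``good'' endpoint $x$ (one with at least $(1+\e)n/2$ $\G$-neighbours inside the path vertex set $U$) and each good $y\in EP(x)$, the classical Dirac pigeonhole on the path $Q=(x=v_0,\dots,v_\ell=y)$ produces at least $\e n/4$ indices $i$ with $\{x,v_{i+1}\},\{y,v_i\}\in E(\G)$ and all four vertices in $L$. If $x$ is \emph{unsettled} (incident to fewer than $\e n/8$ boosters), then for at least $\e n/8$ such $i$ one has $v_{i+1}\notin EP(x)$, which forces $\{y,v_i\}\notin E(G)$; adding this edge makes $\{x,v_{i+1}\}$ a booster. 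Summing over the $\geq \a n/2$ good $y\in EP(x)$ gives $\OM(n^2)$ semiboosters for each unsettled $x$, so every edge of $R_1$ is a semibooster for $x$ with probability $\OM(1)$, and a single Chernoff bound plus a union bound over $x$ delivers the $e^{-\OM(\om n)}$ failure probability directly, with no iteration.

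Your doubling scheme has three concrete problems. First, a random edge ``incident to interior vertices of a longest path'' does not trigger a rotation; only an edge from some current endpoint $v\in EP_t(x)$ to an interior vertex of the specific path $P_v$ does, and even then the resulting new endpoint may already lie in $EP_t(x)$. Showing that enough random edges in $E(\G)\cap L^2$ produce genuinely new endpoints essentially requires the same Dirac pigeonhole used above, at which point the semibooster argument is shorter. Second, to reach $\OM(n^2)$ boosters via your inclusion--exclusion you need $|EP_t(x)|>(1/2-\e/2)n$ not for one $x$ but for $\OM(n)$ endpoints simultaneously; you do not say how the batches accomplish this across all such $x$ at once, and running a separate doubling for each $x$ would cost $\OM(n)$ random edges per endpoint, far exceeding $|R_1|=\OM(\om n)$. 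Third, the arithmetic at the end is off: $O(\log n)$ phases each failing with probability $e^{-\OM(\om n/\log n)}$ union-bound to $e^{-\OM(\om n/\log n)}$, not to the claimed $e^{-\OM(\om n)}$; since the companion lower bound \eqref{eq:lb} is also of order $e^{-O(\om n/\log n)}$, a bound of this strength would leave the ratio argument in Section~\ref{sec:hit} dependent on unspecified constants.
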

\begin{proof}
Assign some arbitrary order $R_1 = \{f_1,\dots,f_{r/2}\}$ to the edges of $R_1$. Let $L = \mathrm{LARGE}\setminus \mathrm{MEDIUM}$. We can treat the $f_i$ as independent uniform edges in $E(\G)\cap L^2$, as doing so only introduces repetitions which decreases the probability of producing many boosters.

Let $P$ be a longest path on vertex set $U$, and let $EP$ be the set of endpoints of paths spanning $U$. Let $EP_L = EP\cap L$. Suppose first that there are at least $(\e n)^2$ edges in $\G$ between $EP_L$ and $\ol U \cap L$. Adding any of these edges extends the path. Each $f_i$ has probability at least $\e$ of landing in this set, so the probability that the path is not extended by such an edge is at most $(1 - \e)^{r/2} \leq e^{-\OM(\om n)}$.

Suppose that there are less than $(\e n)^2$ edges between $EP_L$ and $\ol U\cap L$. As $|\mathrm{MEDIUM}| = o(n)$, there are $o(n^2)$ edges incident to $\mathrm{MEDIUM}$. So at most $2\e n$ vertices of $EP_L$ can have more than $\e n / 2$ edges to $\ol U$. Say that $y\in EP$ is {\em good} if it is in $L$ and has at least $(1+\e)n/2$ edges to $U$ in $\G$. For any $x$, the set $EP(x)$ of opposite endpoints must contain at least $(\a - 2\e)n - o(n) \geq \a n / 2$ good vertices.

We aim to show that in $G\cup R_1$, with probability $1 - e^{-\OM(\om n)}$, either there is a path of longer length than $\ell$ (or is Hamiltonian), or all good endpoints are incident to at least $\e n/8$ boosters. Say that $x\in EP_L$ is {\em settled} if there are at least $\e n / 8$ vertices $y\in EP_L(x)$ such that $\{x, y\}\in E(\G)$, and {\em unsettled} otherwise.

For now, we consider $G_0 = G$, i.e. with no edges of $R_1$ added. Suppose $x$ is good and unsettled. Let $y$ be a good vertex of $EP(x)$, and let $Q = (x = v_0,v_1,\dots,v_\ell = y)$ be a longest path on $U$ between $x$ and $y$. As both $x$ and $y$ are good, there are at least $\e n / 2$ indices $i$ such that $\{x, v_{i + 1}\}, \{y, v_i\} \in E(\G)$. As $\mathrm{MEDIUM}$ has size $o(n)$, there must be at least $\e n / 4$ such indices where $v_i$ and $v_{i+1}$ are both in $L$. As $x$ is unsettled, at most $\e n / 8$ such indices have $v_{i + 1}\in EP(x)$, and for each such index it follows that $\{y, v_i\}\notin E(G)$, as otherwise a rotation around $\{y, v_i\}$ would contradict $v_{i + 1}\notin EP(x)$. We conclude that there are at least $\e n / 8$ indices $i$ such that $\{x, v_{i+1}\} \in E(\G), \{y, v_i\} \in E(\G)\setminus E(G)$, and all four vertices are in $L$. Let $A_0(x, y)$ be the set of such semiboosters $\{y, v_i\}$, and $B_0(x, y)$ the set of such $v_{i+1}$. Repeat this for all good $y\in EP(x)$ (if there is more than one longest path between $x$ and $y$, pick one arbitrarily). Let $A_0(x)$ be the union of $A_0(x, y)$ over all good $y\in EP(x)$. As there are at least $\a n / 2$ good $y\in EP(x)$, and each edge in $A_0(x, y)$ is incident to $y$, we have
$$
|A_0(x)| = \sum_{\substack{y\in EP(x) \\ \text{good}}} |A_0(x, y)| \geq \frac{\a n}{2} \times \frac{\e n}{8} = \frac{\a\e}{16}n^2.
$$
The sets $B_0(x, y)\subseteq U\cap N_\G(x)$ are not necessarily disjoint for different $y$. However, each $z\in U\cap N_\G(x)$ appears in $B_0(x, y)$ for at most $|EP(x)| < n$ different $y$. 

Each $e\in A_0(x)$ is associated with some vertex $z = z(e)\in U\cap N_\G(x)$ such that adding $e$ implies that $\{x, z\}$ is a new booster (or extends the path if $\{x, z\}$ is in $G$). Let $G_1 = G_0\cup \{f_1\}$. If $f_1\in A_0(x)$, then the booster $\{x, z(f_1)\}$ has been added. Any $e\in A_0(x) \setminus \{f_1\}$ with $z(e) = z(f_1)$ may no longer be a semibooster, and we remove such $e$ from our set of semiboosters.

In general, given $A_j(x)$, we reveal $f_{j + 1}$. If $f_{j + 1}\in A_j(x)$ (a {\em success}) we set $A_{j + 1}(x) = A_j(x) \setminus z^{-1}(f_{j + 1})$. If $f_{j + 1}\notin A_j(x)$ (a {\em failure}), we let $A_{j + 1}(x) = A_j(x)$. If revealing $f_1,\dots,f_j$ results in $s$ successes, then
$$
|A_j(x)| \geq |A_0(x)| - sn \geq \frac{\a\e}{16}n^2 - sn.
$$
As long as there are less than $\a \e n / 32$ successes, there are at least $\a\e n^2 / 32$ semiboosters left in $A_j(x)$. In this case, $f_{j + 1}$ has probability at least $|A_j(x)| / \binom{n}{2} \geq \a \e / 16$ of succeeding. So the probability that adding all of $R_1$ results in less than $\a\e n / 32$ successes is bounded by
$$
\Prob{\Bin{\frac{r}{2}}{\frac{\a \e}{16}} < \frac{\a\e n}{32}} = e^{-\OM(\om n)}.
$$
So with probability $1 - e^{-\OM(\om n)}$, $x$ is incident to at least $\a \e n / 16$ boosters for $G\cup R_1$. Repeating this for all good and unsettled $x$, and recalling that a settled $x$ already is incident to many boosters, the probability that some good $x$ is incident to less than $\a \e n /16$ boosters in $G\cup R_1$ is at most $ne^{-\OM(\om n)} = e^{-\OM(\om n)}$.  This shows that $G\cup R_1$, if still in $\PP_\ell$, has at least
$$
\frac12 \times \frac{\a n}{2} \times \frac{\a\e}{16} n = \frac{\a^2\e}{64} n^2
$$
boosters with probability $1 - e^{-\OM(\om n)}$.
\end{proof}

Suppose $G\cup  R_1$ has $\OM(n^2)$ boosters. We expose $R_2$. The probability that none of the $\OM(n^2)$ boosters are in $R_2$ is $e^{-\OM(\om n)}$. As we condition on $\GG_\ell' \subseteq \CC'$, adding a booster extends the length of the longest path or forms a Hamilton cycle. We conclude that
$$
\Prob{\PP_\ell \mid \GG_\ell'} \leq e^{-\OM(\om n)}.
$$

\subsection{Properties of $G_{\t_2}$}\label{sec:otherbounds}

Recall from \eqref{eq:manyevents} that it remains to show that $\Prob{\ol{\CC'}\cap \EE'}$, $\Prob{\ol{\MM'}\cap \TT}$ and $\Prob{\ol{\RR'}\cap \MM'\cap \TT}$ are all at most $e^{-\OM(\om n / \log\log n)}$.

\subsubsection{Connectivity}

We bound $\Prob{\ol{\CC'}\cap \EE'}$. Suppose $\EE'$ holds. Lemma \ref{lem:expansion} implies that in $G = G_{\t_2}$, any $S$ with $|S|\leq \a n$ has $|N_G(S)| \geq 2|S|$. In particular, any connected component has size at least $3\a n$. Suppose $S$ is a set of size $3\a n \leq s \leq n / 2$. Then $e_\G(S, \ol S) \geq s(\b n - s)\geq \e sn$, and for $p\geq \frac{\log n}{n}$,
\al{
  \Prob{\exists\ 3\a n \leq |S| \leq n / 2 : e_G(S,\ol S) = 0} & \leq \sum_{s = 3\a n}^{n / 2} \binom{n}{s} (1-p)^{\e sn}  \\
  & \leq \sum_{s = 3\a n}^{n / 2}\left(\frac{ne}{s} \left(1 - \frac{\log n}{n}\right)^{\e n}\right)^s \\
  & \leq \sum_{s = 3\a n}^{n / 2} \left(\frac{e}{3\a} n^{-\e}\right)^s \leq e^{-\OM(\om n)}.
}

\subsubsection{The set $\mathrm{MEDIUM}$}\label{sec:medium}

Define $\MM'_t$ as the event that the blue subgraph $\G_t^b$ has $|\mathrm{MEDIUM}| \leq \om n / \log\log n$, noting that $\MM' = \MM'_{\t_2}$. We have $\MM'_s\subseteq \MM'_t$ whenever $s\leq t$, so
$$
\ol{\MM'_{\t_2}}\cap \TT \subseteq \ol{\MM'_T}\cap \TT \subseteq \ol{\MM'_T}.
$$

By an argument similar to the one behind Lemma \ref{lem:ptot}, we can couple $\G_T^b$ to $\G_p$ where $p = (p_0 - \om/n)(1 - q) \geq \frac{\log n}{2n}$, and we move to bounding the probability that $\G_p$ has more than $\om n / \log\log n$ vertices of degree less than $\s\log n$.

Let $S$ be a set of $s = \om n / \log\log n$ vertices. For any $v\in S$, the probability that $v$ has degree less than $\s\log n$ in $\G_p$, is at most the probability that it has less than $\s\log n$ edges to $\ol S$. As $v$ has at least $\d(\G) - s$ potential edges to $\ol S$, we have
\al{
  \Prob{e(v, \ol S) < \s\log n} \leq \sum_{k = 0}^{\s\log n} \binom{d_\G(v)}{k} p^{k} \left(1 - p\right)^{\d(\G)-s-k}.
}
Letting $b_k$ denote the summand, we have
$$
\frac{b_{k + 1}}{b_k} = \frac{p}{1-p} \frac{d_\G(v) - k}{k + 1} \geq (1-o(1))\frac{\log n}{2n} \frac{n / 2}{\s\log n + 1} > 2,
$$
as $\s = 1/100$. If follows that $\sum b_k \leq 2b_{\s\log n}$, and as $\frac{\log n}{2n} \leq p \leq 2\frac{\log n}{n}$ and $\d(\G) > (1 + \e)n/2 + 2\s\log n$,
\al{
  \Prob{e(v, \ol S) < \s\log n} & \leq 2\left(\frac{ne}{2\s\log n}\frac{2\log n}{n}\right)^{\s\log n} \left(1 - \frac{\log n}{2n}\right)^{(1+\e)n/2} \\
  & \leq 2\bfrac{e}{\s}^{\s\log n}n^{-(1 + \e)/4}.
}
With $\s = 1/100$, this is at most $n^{-1/8}$. As the $e(v, \ol S)$ are independent random variables for all $v\in S$, the probability that $e(v, \ol S) < \s\log n$ for all $v\in S$ is at most $n^{-s/8}$. It follows that
\al{
  \Prob{|\mathrm{MEDIUM}| > s} & \leq \binom{n}{s}n^{-s/8} \\
  & \leq \left(\frac{ne}{s} \frac{1}{n^{1/8}}\right)^s \\
  & \leq \exp\left\{\frac{7s}{8}\log n - s\log s\right\} \\
  & \leq \exp\left\{- \frac{\om n}{16\log \log n}\right\},
}
as $s = \om n / \log \log n$.

\subsubsection{There are enough red edges outside $\mathrm{MEDIUM}$}

Let $\DD_t$ be the event that at least $\om n$ edges are coloured red in $\G_t$. This is increasing in $t$, and $T = \OM(n\log n)$, so
\begin{multline}
\Prob{\ol{\DD_{\t_2}} \cap \TT} \leq \Prob{\ol{\DD_T} \cap \TT} \leq \Prob{\ol{\DD_T}} \\
= \Prob{\Bin{T}{\frac{\om}{\log n}} < \om n}  \leq e^{-\OM(\om n)}.
\end{multline}
Conditioning on $\DD_{\t_2}$, the probability that there exists some set $M$ of $s = o(n)$ vertices such that more than $\om n / 2$ red edges have at least one endpoint incident to $M$ is at most $e^{-\OM(\om n)}$. As $\MM'$ implies the existence of such a set, we have
$$
\Prob{\ol{\RR'}\cap \MM'\cap \TT} \leq \Prob{\ol{\DD_{\t_2}} \cap \TT} + \Prob{\ol{\RR'}\cap \MM' \mid \DD_{\t_2}} \leq e^{-\OM(\om n)}.
$$

\subsection{Properties of $\G_{\t_2}$}\label{sec:props}

In \eqref{eq:N} we claim that $\Prob{\NN} = \Prob{\SS\cap\EE_1\cap\EE_2} = 1 - o(1)$, which we now prove. 

\subsubsection{$\SS$ -- $\mathrm{SMALL}$ is small}\label{sec:smallsmall}

The set $\mathrm{SMALL}$ is defined as the set of vertices of degree at most $\s\log n$ in $\G_{\t_2}$. We will show that $\mathrm{SMALL}$ is small in $\G_T$, which is enough as $\SS$ is an increasing property. With $p = p_0 - \om/n$, repeating the calculations in Section \ref{sec:medium},
\al{
  \Prob{d_T(v) < \s\log n} \leq 2 \bfrac{2e}{\s}^{\s\log  n} (1-p)^{d_\G(v)}.
}
With $\s = 1/100$ we have $\s\log(2e/\s) < 0.08$, and \eqref{eq:ombelow} shows that $\sum_v (1-p)^{d_\G(v)} = o(n^{0.01})$, so
$$
\mathbb{E}|\mathrm{SMALL}| \leq 2n^{0.08}\sum_v (1-p)^{d_\G(v)} \leq n^{ 0.09}. 
$$
Markov's inequality implies that $|\mathrm{SMALL}| \leq n^{0.1}$ whp.

\subsubsection{$\EE_1$ -- Small sets are sparse}

Let $\G_n$ have $m$ edges and minimum degree $\b n$. Recall, with the threshold $p_0$ as defined in Theorem \ref{thm:2threshold}, that we define for some $\om$,
$$
T = \left(p_0 - \frac{\om}{n}\right)m, \quad T' = \left(p_0 + \frac{\om}{n}\right)m.
$$
By Lemma \ref{lem:threshold}, the hitting time $\t_2$ for having minimum degree $2$ satisfies $T\leq \t_2\leq T'$. In this section we show that $|N_G(S)| \geq 2|S|$ for all $|S| \leq \e n$ whp in $G = \G_{\t_2}$.

\begin{lemma}\label{lem:sparsity}
Suppose $|p - p_0| \leq \om / n$ and let $G = \G_p$. Whp, no $S\subseteq V$ with $|S| \leq 6\a n$ contains more than $\frac{\s\log n}{K}|S|$ edges.
\end{lemma}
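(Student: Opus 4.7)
The plan is a standard first-moment union bound. For fixed $S\subseteq V$ with $|S|=s$, the number of $\G_p$-edges inside $S$ is stochastically dominated by $\Bin{\binom{s}{2}}{p}$, so with $k = \lceil \s s\log n /K\rceil$ the crude tail bound (union bound over which $k$ trials succeed) gives
\[
\Prob{e_{\G_p}(S) \geq k} \leq \binom{\binom{s}{2}}{k}p^k \leq \bfrac{e s^2 p}{2k}^k.
\]
The estimate $p_0 \leq 2\log n/n$ established at the start of Section~\ref{sec:threshold}, together with $|p-p_0|\leq \om/n$ and $\om = o(\log\log n)$, yields $p \leq 3\log n/n$; this gives $\frac{es^2 p}{2k}\leq \frac{3eKs}{2\s n}$.

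A union bound over the $\binom{n}{s}\leq (ne/s)^s$ choices of $S$ will then produce
\[
\Prob{\exists\,|S|=s:\, e_{\G_p}(S)\geq k} \leq \left[\frac{ne}{s}\bfrac{3eKs}{2\s n}^{\s\log n/K}\right]^s.
\]
Substituting the fixed constants $\s = 1/100$, $K = 10$ (so $\s\log n/K = \log n/1000$ and $3eK/(2\s) = 1500e$) and writing $x = s/n$, the bracketed expression becomes $\frac{e}{x}(1500 e x)^{\log n/1000}$. Its logarithm equals $1 - \log x + (\log n/1000)\bigl(\log(1500e) + \log x\bigr)$, which is increasing in $\log x$ once $\log n > 1000$; hence over the range $s\in\{1,\dots,6\a n\}$ it is maximized at $x = 6\a = 6e^{-2000}$.

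At this extreme, $\log(1500 e\cdot 6\a) = \log 9000 - 2000 < -1989$, so for $n$ sufficiently large the bracket is bounded by $n^{-1.98}$. Each per-$s$ contribution is then at most $n^{-1.98 s}$, and the resulting geometric sum over $1\leq s\leq 6\a n$ evaluates to $O(n^{-1.98}) = o(1)$. The only delicate point will be the bookkeeping with the absolute constants $\a, \s, K$: the tiny value $\a = e^{-2000}$ is calibrated precisely so that the factor $(1500 e x)^{\log n/1000}$ overwhelms the entropy term $(ne/s)^s$, providing a genuine power-of-$n$ cushion in the final bound.
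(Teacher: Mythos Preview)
Your proof is correct and follows essentially the same first-moment union-bound strategy as the paper: both bound $\Prob{e_{\G_p}(S)\geq k}$ by $\binom{\binom{s}{2}}{k}p^k$, multiply by $\binom{n}{s}$, and then verify that the fixed constants $K=10$, $\s=1/100$, $\a=e^{-2000}$ make the resulting expression $o(1)^s$. The only cosmetic differences are that the paper starts the sum at $s=\s\log n/(2K)$ (below which the event is vacuous) and bounds more crudely via $ne/s\leq n$ and $s/n\leq 6\a$, whereas you sum from $s=1$ and use monotonicity of the bracket in $\log x$; both routes arrive at the same conclusion.
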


\begin{proof}
Recall from Section \ref{sec:threshold} that as $\d(\G_n) \geq n/2$, we have $p_0 \leq 2\frac{\log n}{n}$. The lemma follows from the first moment method: a set $S$ of size $s$ contains at most $\binom{s}{2}$ edges of $\G$, so
\al{
  & \Prob{\exists |S| \leq 6\a n : e_G(S) > \frac{\s\log n}{K}|S|}\\
  \leq\ & \sum_{s = \frac{\s\log n}{2K}}^{\a n}\binom{n}{s}\binom{\binom{s}{2}}{\frac{\s\log n}{K}s}p^{\frac{\s\log n}{K}s} \\
  \leq\ & \sum_{s = \frac{\s\log n}{2K}}^{\a n} \left(\frac{ne}{s}\bfrac{Kes}{2n\s}^{\frac{\s\log n}{K}}\right)^s \\
  \leq\ & \sum_{s = \frac{\s\log n}{2K}}^{\a n} \left(n \bfrac{Ke\a}{2\s}^{\frac{\s\log n}{K}}\right)^s.
}
The constants $K = 10, \s = 1/100, \a = e^{-2000}$ were chosen so that $\frac{\s}{K}\ln(Ke\a/2\s) < -1$, so the summand is $o(1)^s$, and the sum tends to zero.

\end{proof}

\subsubsection{$\EE_2$ -- There are no small structures}

Recall from Section \ref{sec:threshold} that we define $T = (p_0 - \om/n)m$ and $T' = (p_0 + \om / n)m$, with $\om$ as chosen in Section \ref{sec:hit}. The following lemma implies that $\Prob{\ol{\EE_2}} = o(1)$, as $T\leq t\leq T'$ whp by Lemma \ref{lem:threshold}.

\begin{lemma}\label{lem:smallstruct}
Whp, the following holds in $\G_{\t_2}$. No two vertices $u,v\in \mathrm{SMALL}$ are connected by a path of length at most $4$, and no vertex in $\mathrm{SMALL}$ is on a cycle of length at most $4$.
\end{lemma}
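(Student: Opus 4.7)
My approach is a first moment argument, combined with a sandwich $\G_T\subseteq\G_{\t_2}\subseteq\G_{T'}$ in order to decouple the (non-monotone) event into a statement on two coupled Erd\H{o}s--R\'enyi graphs. By Lemma \ref{lem:threshold}, whp $T\le\t_2\le T'$, and then $\G_T\subseteq\G_{\t_2}\subseteq\G_{T'}$. Since extra edges only remove vertices from $\mathrm{SMALL}$, we have $\mathrm{SMALL}(\G_{\t_2})\subseteq\mathrm{SMALL}(\G_T)$; meanwhile every short path or cycle in $\G_{\t_2}$ already lies in $\G_{T'}$. Hence it suffices to show that whp, in the coupled pair $(\G_T,\G_{T'})$, no path of length at most four in $\G_{T'}$ has both endpoints in $\mathrm{SMALL}(\G_T)$, and no cycle of length at most four in $\G_{T'}$ meets $\mathrm{SMALL}(\G_T)$. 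Via a joint version of Lemma \ref{lem:ptot} (for the natural coupling $\G_T\subseteq\G_{T'}$), this transfers to the analogous statement for $\G_p\subseteq\G_{p'}$ with $p=p_0-\om/n$ and $p'=p_0+\om/n$.

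Fix a path $P=v_0v_1\cdots v_k$ in $\G$ with $k\le 4$. The event $\{P\subseteq\G_{p'}\}$ has probability $(p')^k\le(2\log n/n)^k$, and conditioning on it leaves the remaining edges incident to $v_0$ independently in $\G_p$ with probability $p$, so $d_{\G_p}(v_0)$ is within $O(1)$ of a $\mathrm{Bin}(d_\G(v_0),p)$ variable. The Binomial ratio argument of Section \ref{sec:smallsmall} gives $\Pr[d_{\G_p}(v_0)<\s\log n\mid P\subseteq\G_{p'}]\le O(n^{0.08})(1-p)^{d_\G(v_0)}$, and the same bound holds at $v_k$; the two conditional events depend on disjoint edge sets apart from at most the single edge $\{v_0,v_k\}$, so they are independent up to a factor $1+o(1)$. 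For each endpoint pair there are at most $n^{k-1}$ paths of length $k$ in $\G$, and the same derivation behind \eqref{eq:ombelow} yields the upper bound $\sum_v(1-p)^{d_\G(v)}\le e^{2\om}/\log n=(\log n)^{o(1)}/\log n$. Therefore
\al{
\mathbb{E}[\text{bad paths of length }k]&\le O(n^{0.16})(p')^k n^{k-1}\Bigl(\textstyle\sum_v(1-p)^{d_\G(v)}\Bigr)^2 \\
&=O\!\left(n^{-0.84}(\log n)^{k-2+o(1)}\right)=o(1).
}
An identical calculation for cycles of length $k\le 4$ through a single SMALL vertex (at most $n^{k-1}$ cycles per base vertex, only one $\mathrm{SMALL}$ factor) gives $O(n^{-0.92}(\log n)^{k-1+o(1)})=o(1)$. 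Summing over $k\le 4$ and invoking Markov's inequality finishes the argument.

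The main obstacle is the non-monotonicity of the event: inserting a new edge simultaneously shrinks $\mathrm{SMALL}$ and can create new short paths, so Lemma \ref{lem:ptot} cannot be applied directly to $\EE_2$. The sandwich $\G_T\subseteq\G_{\t_2}\subseteq\G_{T'}$ is the cleanest workaround, assigning $\mathrm{SMALL}$-membership to the smaller graph and path/cycle containment to the larger; once this is done, the problem reduces to a routine extension of the first moment calculation already carried out in Section \ref{sec:smallsmall}.
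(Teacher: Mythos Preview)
Your proof is correct and follows essentially the same approach as the paper: both use the sandwich $\G_T\subseteq\G_{\t_2}\subseteq\G_{T'}$, attribute membership in $\mathrm{SMALL}$ to the smaller graph $\G_T$ and path/cycle containment to the larger graph $\G_{T'}$, and then run a first moment calculation over potential bad configurations using the degree bound from Section~\ref{sec:smallsmall}. The only point to flag is your appeal to a ``joint version of Lemma~\ref{lem:ptot}'': the paper handles this by applying the non-monotone case of Lemma~\ref{lem:ptot} and paying the $(T')^{1/2}=O(n^{1/2}\log n)$ factor, and your bounds $O(n^{-0.84+o(1)})$ and $O(n^{-0.92+o(1)})$ have ample slack to absorb it.
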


\begin{proof}
We let $S$ be the set of vertices of degree less than $\s\log n$ in $\G_T$, noting that $\mathrm{SMALL}\subseteq S$, and bound the probability that $\G_{T'}$ contains a short path or cycle involving $S$ as described. As $T\leq\t_2\leq T'$ whp, the lemma will follow.

We show the proof for the path $P_4$ on three edges, and later explain how the other will follow. Write $\G_t \leftarrow P_4$ for the event that $P_4$ is in $\G_t$ with its two endpoints in $S$. We consider a path $P$ on vertex set $U = \{u_1,u_2,u_3,u_4\}$, where $u_1,u_4$ are the endpoints. Summing over injective maps $\f : U\to V$, writing $v_i = \f(u_i)$, we bound the probability that $v_1,v_4\in S$ and that $G[\f(U)]$ contains three edges as follows. 

Let $p = (p_0 - \om/n)m$ and $p_1 = (p_0 + \om/n)m$. We pick four vertices $v_1,v_2,v_3,v_4$, and three edges forming a path on these vertices. These edges are included in $\G_{p_1}$ with probability $p_1^3$. This  does not significantly change the probability that $v_1$ and $v_4$ are small, and repeating the calculations in Section \ref{sec:smallsmall} we can bound
\al{
  \Prob{\G_{p_1} \leftarrow P_4} & \leq \sum_{v_1,v_2,v_3,v_4} p_1^3\Prob{v_1,v_4\in \mathrm{SMALL}} \\
  & \leq n^2\bfrac{2\log n}{n}^3 \left(\sum_v \Prob{d_T(v) \leq \s\log n}\right)^2 \\
  & \leq 8n^{0.2 - 1}
}
We apply Lemma \ref{lem:ptot}, noting that $(T')^{1/2} = O(n^{1/2}\log n)$, and conclude that
$$
\Prob{\G_{T'} \leftarrow P_4} = O((T')^{1/2} n^{-0.95}) = o(n^{-1/4}),
$$
and $\G_{T'} \nleftarrow P_4$ whp implies that $\G_t\nleftarrow P_4$ for all $T\leq t\leq T'$. In general, suppose $H$ is a small graph on $u$ vertices with $f$ edges, and $s$ vertices required to be in $S$, such that $f + s - u \geq 1$ and $s\leq 2$. This is the case for all the graphs considered, and repeating the above calculations gives
$$
\Prob{\exists T\leq t \leq T':\G_{t} \leftarrow H} = O(T^{1/2}n^{0.05 - f - s + u}) = o(n^{-1/4}).
$$
\end{proof}

\subsection{Expansion: Proof of Lemma \ref{lem:expansion}}\label{sec:expansion}

What now remains is to prove Lemma \ref{lem:expansion}, which states that if $\EE$ then $|N(S)| \geq 2|S|$ whenever $|S| \leq \a n$ for $G_{\t_2}$ and $\G_{\t_2}$. As expansion is an increasing property and $G_{\t_2}\subseteq \G_{\t_2}$, we only need to show that this holds for $G = G_{\t_2}$. We begin with a lemma.

\begin{lemma}\label{lem:largesets}
Suppose $\EE$ occurs and $S\subseteq \mathrm{LARGE}$ has $|S| \leq \a n$. Then $|N_G(S)| \geq 5|S|$ in $G = G_{\t_2}$.
\end{lemma}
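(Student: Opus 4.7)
The plan is to argue by contradiction: suppose $|N_G(S)| < 5|S|$, and derive a contradiction from the sparsity property $\EE_1$ together with a minimum-degree bound for $\mathrm{LARGE}$ vertices in $G = G_{\t_2}$.

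First I would verify that every $v \in \mathrm{LARGE}$ satisfies $d_G(v) \geq \s\log n$. If $v \in \mathrm{MEDIUM}$, then every edge of $\G_{\t_2}$ incident to $v$ is retained in $G_{\t_2}$ (blue ones always, red ones because they meet $\mathrm{MEDIUM}$), so $d_G(v) = d_{\G_{\t_2}}(v) \geq \s\log n$ since $v\in\mathrm{LARGE}$. If $v \notin \mathrm{MEDIUM}$, the blue degree alone satisfies $d_{\G_{\t_2}^b}(v) \geq \s\log n$, and all blue edges lie in $G$. Summing over $v \in S$ gives
\[
  \sum_{v\in S} d_G(v) \;\geq\; |S|\,\s\log n.
\]

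Now set $T = S \cup N_G(S)$. If $|N_G(S)| < 5|S|$ then $|T| < 6|S| \leq 6\a n$, so $\EE_1$ applies to both $S$ and $T$, and combined with $G \subseteq \G_{\t_2}$ this yields
\[
  e_G(S) \;\leq\; \tfrac{\s\log n}{K}|S| \quad\text{and}\quad e_G(S,N_G(S)) \;\leq\; e_{\G_{\t_2}}(T) \;\leq\; \tfrac{\s\log n}{K}|T|.
\]
Plugging into the degree-sum identity $\sum_{v\in S} d_G(v) = 2 e_G(S) + e_G(S, N_G(S))$ gives
\[
  |S|\,\s\log n \;\leq\; \tfrac{\s\log n}{K}\bigl(2|S| + |T|\bigr) \;<\; \tfrac{8\s\log n}{K}|S|,
\]
which forces $K < 8$, contradicting the choice $K = 10$.

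I expect no substantive obstacle beyond this bookkeeping. The one mild subtlety is to apply $\EE_1$ separately to $S$ and to $T$, rather than loosely bounding $\sum_{v\in S} d_G(v) \leq 2e_G(T)$ in one stroke: the latter double-counts edges internal to $S$ and would only give $K \leq 12$, which is not sharp enough to close the argument with $K = 10$.
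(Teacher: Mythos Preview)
Your proof is correct and follows essentially the same contradiction-via-sparsity route as the paper: bound $\sum_{v\in S} d_G(v)$ from below by $|S|\s\log n$ and from above using $\EE_1$ applied to $S$ and to $S\cup N_G(S)$, then compare. Your explicit verification that every $v\in\mathrm{LARGE}$ has $d_{G_{\t_2}}(v)\geq \s\log n$ (splitting into the cases $v\in\mathrm{MEDIUM}$ and $v\notin\mathrm{MEDIUM}$) is a detail the paper leaves implicit, and your closing remark about why the cruder bound $\sum_{v\in S} d_G(v)\leq 2e_G(T)$ is insufficient with $K=10$ is accurate.
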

\begin{proof}
If $|N_G(S)| < 5|S|$, then $|S\cup N_G(S)| \leq 6\a n$. As $\EE_1\subseteq \EE_1'$, this implies that $e_G(S\cup N_G(S)) \leq \frac{\s\log n}{K}|S|$. We then have
\al{
  |S|\s\log n & \leq \sum_{v\in S} d(v) = 2e_G(S) + e_G(S,N_G(S)) \\
  & \leq e_G(S) + e_G(S\cup N_G(S)) \\
  & \leq \frac{\s\log n}{K}\left(|S| + |S\cup N_G(S)|\right) \\
  & \leq |S|\frac{2\s}{K}\log n + |N_G(S)|\frac{\s\log n}{K},
}
and we conclude that
$$
\frac{|N_G(S)|}{|S|} \geq \frac{K}{\s\log n} \left(\s - \frac{2\s}{K}\right)\log n \geq K - 2.
$$
As $K = 10$, this proves the lemma.
\end{proof}

Now let $S$ be any set of at most $\a n$ vertices, and let $S_1 = S \cap \mathrm{SMALL}$ and $S_2 = S\cap \mathrm{LARGE}$. Then
\begin{multline}
|N_G(S)| = |N_G(S_1)| + |N_G(S_2)|  \\
 - |N_G(S_1)\cap S_2| - |N_G(S_2)\cap S_1| - |N_G(S_1)\cap N_G(S_2)|  \\
\geq |N_G(S_1)| + |N_G(S_2)| - |S_2| - |N_G(S_2)\cap S_1| - |N_G(S_1) \cap N_G(S_2)|.
\end{multline}
We have $|N_G(S_2)| \geq 5|S_2|$ by Lemma \ref{lem:largesets}. As $\EE_2\subseteq \EE_2'$, there are no paths of length $2$ between vertices of $S_1$, which implies $|N_G(S_1)| \geq \sum_{v\in S_1} d(v)\geq 2|S_1|$ (as $\d(G_{\t_2}) \geq 2$), and $|N_G(S_2)\cap S_1| \leq |S_2|$. Finally, as there are no short cycles intersecting $S_1$ and no paths of length $\leq 4$ between vertices of $S_1$, again by $\EE_2$, we have $|N_G(S_1)\cap N_G(S_2)| \leq |S_2|$. This implies that
$$
|N_G(S)| \geq 2|S_1| + 5|S_2| - 3|S_2| \geq 2|S|.
$$

\bibliographystyle{plain}
\bibliography{hamdirac}

\end{document}